\numberwithin{equation}{section}
\theoremstyle{plain}
\newtheorem{thm}{Theorem}[section]
\newtheorem{theorem}[thm]{Theorem}
\newtheorem{proposition}[thm]{Proposition}
\newtheorem{lemma}[thm]{Lemma}
\newtheorem{corollary}[thm]{Corollary}
\theoremstyle{definition}
\newtheorem{remark}[thm]{Remark}
\newcommand{\bC}{{\mathbb{C}}}
\newcommand{\bZ}{{\mathbb{Z}}}
  \newcommand{\D}{{\mathcal{D}}}
  \newcommand{\K}{{\mathcal{K}}}  
\renewcommand{\L}{{\mathcal{L}}}
  \newcommand{\M}{{\mathcal{M}}}
\renewcommand{\O}{{\mathcal{O}}}
\newcommand{\fg}{{\mathfrak{g}}}
\newcommand{\fH}{{\mathfrak{H}}}
\newcommand{\fh}{{\mathfrak{h}}}
\def\ii{\mathrm{i}}
\def\bt{{\boldsymbol{t}}}
\def\bm{{\boldsymbol{m}}}
\def\bn{{\boldsymbol{n}}}
\def\bp{{\boldsymbol{p}}}
\def\bq{{\boldsymbol{q}}}
\def\bd{{\boldsymbol{\delta}}}
\renewcommand{\phi}{\varphi}
\newcommand{\upchi}{{\raise.35ex\hbox{$\chi$}}}
\def\vac{\boldsymbol{1}}
\DeclareMathOperator\Aut{Aut}
\DeclareMathOperator\End{End}
\DeclareMathOperator\Ind{Ind}
\DeclareMathOperator\Span{span}
\begin{document}

\title[Representations of Twisted Toroidal Lie Algebras]{Representations of Twisted Toroidal Lie Algebras from Twisted Modules over Vertex Algebras}
\author{Bojko Bakalov}
\author{Samantha Kirk}
\address{Department of Mathematics, North Carolina State University, Raleigh, NC 27695, USA}
\email{bojko\_bakalov@ncsu.edu}
\email{slkirk@ncsu.edu}
\date{September 2, 2020. Revised February 14, 2021}
\maketitle
\begin{abstract}
Given a simple finite-dimensional Lie algebra and an automorphism of finite order, one defines the notion of a twisted toroidal Lie algebra. 
In this paper, we construct representations of twisted toroidal Lie algebras from twisted modules over affine and lattice vertex algebras. 
\end{abstract}
\section{Introduction}\label{sec:1}
A toroidal Lie algebra is the universal central extension of the tensor product of a simple finite-dimensional Lie algebra $\fg$ and the ring of Laurent polynomials in $r$ variables. Moody, Rao, and Yokonuma were the first to introduce toroidal Lie algebras in \cite{mry}, which led to constructions of representations of toroidal Lie algebras from vertex operators in works such as \cite{yb, em, fab, jing, jing2, tan1, tan2}. In a fashion similar to twisted affine Lie algebras \cite{IDLA}, Fu and Jiang \cite{FJ} used diagram automorphisms of $\fg$ of finite order to construct twisted toroidal Lie algebras and studied their integrable modules.  Some representations of twisted toroidal Lie algebras were constructed in works such as \cite{abp,cjkt,misra}.

Vertex algebras have proven to be a useful tool in constructing vertex operator representations of infinite-dimensional Lie algebras \cite{b, curves, flm, VAFB, bombay, li}.  Some examples include the use of lattice vertex algebras to generalize the Frenkel--Kac realization of affine Kac--Moody algebras in terms of vertex operators \cite{b, fk} and the use of tensor products of vertex algebras to create representations of untwisted toroidal Lie algebras by Berman, Billig, and Szmigielski \cite{bm}. 
Twisted vertex operators first appeared in applications such as the principal realization of affine Kac--Moody algebras \cite{KKLW, lep3} and the Frenkel--Lepowsky--Meurman construction of the moonshine vertex operator algebra \cite{flm, gannon}. These twisted vertex operators led to the notion of a twisted module over a vertex algebra \cite{bak, dong, gold, lep1}. 

In this paper, we use twisted modules over affine and lattice vertex algebras to construct representations of twisted toroidal Lie algebras. Our paper is influenced by Berman, Billig, and Szmigielski's work on constructing representations of untwisted toroidal Lie algebras in \cite{bm, sbm, yb} and we will rely on their notation in our description of toroidal Lie algebras. 

The paper is organized as follows.
In Section \ref{sec:2}, we provide background information on vertex algebras. 
In Section \ref{sec:voraff}, we connect representations of affine Kac--Moody Lie algebras with twisted modules over affine vertex algebras.  
In Section \ref{sec:vortor}, we use the representations of affine Kac--Moody algebras presented in Section \ref{sec:voraff} as the framework to build representations of twisted toroidal Lie algebras from twisted modules over a tensor product of an affine vertex algebra and lattice vertex algebras. 
Unless otherwise specified, all vector spaces, linear maps and tensor products will be over the field $\bC$ of complex numbers.

\section{Preliminaries}\label{sec:2}

The purpose of this section is to review basic definitions and establish notation for vertex algebras. 
For more details, we refer the reader to \cite{bak, curves, VAFB, li}.

\subsection{Vertex algebras}\label{sec:va}

Recall that a \emph{vertex algebra} \cite{curves, VAFB, li}
is a vector space $V$ with a distinguished vector $\vac\in V$ 
(\emph{vacuum vector}), equipped with bilinear \emph{$n$-th products} for $n\in\bZ$:
\begin{equation}\label{vert1}
V\otimes V\to V, \qquad a\otimes b \mapsto a_{(n)}b,
\end{equation}
subject to the following axioms. First, for every fixed $a,b\in V$, we have
$a_{(n)}b = 0$ for sufficiently large $n$ (denoted $n\gg0$).
Second, the vacuum vector $\vac$ plays the role of a unit in the sense that
\begin{equation*}
a_{(-1)}\vac = \vac_{(-1)} a = a, \qquad 
a_{(n)}\vac = 0, \quad n\geq 0.
\end{equation*}
Finally, the main axiom of a vertex algebra is the \emph{Borcherds identity}
(also called Jacobi identity \cite{flm})
satisfied for all $a,b,c \in V$ and $k,m,n\in\bZ$:
\begin{equation}\label{vert5}
\begin{split}
\sum_{j=0}^\infty & \binom{m}{j} (a_{(k+j)}b)_{(m+n-j)}c
= \sum_{i=0}^\infty \binom{k}{i} (-1)^i
a_{(m+k-i)}(b_{(n+i)}c)
\\
&- \sum_{i=0}^\infty \binom{k}{i} (-1)^{k+i} \, b_{(n+k-i)}(a_{(m+i)}c).
\end{split}
\end{equation}

Note that all sums in \eqref{vert5} are in fact finite. We can view \eqref{vert1} as defining
a sequence of linear operators $a_{(n)}$ on $V$, for $a\in V$, $n\in\bZ$, called the
\emph{modes} of $a$.
Setting $k=0$ in the
Borcherds identity, we obtain the \emph{commutator formula} 
\begin{equation}\label{vert6}
[a_{(m)}, b_{(n)}]=\sum_{j=0}^\infty \binom{m}{j} (a_{(j)}b)_{(m+n-j)},
\end{equation}
which will be very useful for us.

It is convenient to organize the modes into formal power series
\begin{equation}\label{vert7}
Y(a,z)=\sum_{n\in \mathbb{Z} }a_{(n)} z^{-n-1}, \qquad a\in V,
\end{equation}
called \emph{fields} or vertex operators.
The linear map $Y\colon V\to(\End V)[[z,z^{-1}]]$ is known as the \emph{state-field correspondence}.
Observe that $Y(\vac,z)=I$ is the identity operator.

Introduce the \emph{translation operator} $T\in \End(V)$ defined by $Ta=a_{(-2)}\vac$.
Then
\begin{equation}\label{vert2}
[T, Y(a,z)]=Y(Ta,z) = \partial_zY(a,z),
\end{equation}
or equivalently,
\begin{equation}\label{vert2b}
[T,a_{(n)}]=(Ta)_{(n)}=-n a_{(n-1)}.
\end{equation}
In particular, $T$ is a derivation of all $n$-th products.

Another important consequence of the Borcherds identity is the \emph{$(-1)$-st product identity} $(a,b\in V)$:
\begin{equation}\label{vert3}
\begin{split}
Y(a_{(-1)}b,z) &={:}Y(a,z)Y(b,z){:}
\\
&=\sum_{n<0} a_{(n)} z^{-n-1} Y(b,z) 
+\sum_{n\ge0} Y(b,z) a_{(n)} z^{-n-1}.
\end{split}
\end{equation}
The double colons in \eqref{vert3} denote the so-called \emph{normally-ordered product}. Combining \eqref{vert2}--\eqref{vert3}, we get
\begin{equation}\label{vert4}
Y(a_{(-1-m)}b,z) = \frac1{m!} {:}(\partial_z^m Y(a,z))Y(b,z){:},
\qquad m\ge0.
\end{equation}

Finally, recall that the tensor product of two vertex algebras $V_1$ and $V_2$ is again a vertex algebra \cite{FHL} with a vacuum vector $\vac\otimes\vac$ and a state-field correspondence given by
($a\in V_1$, $b\in V_2$):
\begin{equation}\label{vert9}
Y(a\otimes b, z)=Y(a, z)\otimes Y(b,z)=\sum_{k,m\in \bZ}a_{(k)}\otimes b_{(m)}z^{-k-m-2}.
\end{equation}
In terms of modes, we have
\begin{equation}\label{vert10}
(a\otimes b)_{(n)}=\sum_{k\in \bZ}a_{(k)}\otimes b_{(n-k-1)}.
\end{equation}
The translation operator in $V_1\otimes V_2$ is $T\otimes I + I\otimes T$.

\subsection{Twisted modules over vertex algebras}\label{sec:twva}
Let $V$ be a vertex algebra and $\sigma$ be an automorphism of $V$, i.e., a linear operator such that $\sigma(\vac)=\vac$ and $\sigma(a_{(n)}b) = (\sigma a)_{(n)}(\sigma b)$ for all
$a,b\in V$, $n\in\bZ$. Suppose that $\sigma^N=I$ for some positive integer $N$. Then $\sigma$ is diagonalizable on $V$.

A \emph{$\sigma$-twisted $V$-module} \cite{dong, gold} is a vector space $M$ endowed with a linear map
$Y^M\colon V\to (\End M)[[z^{1/N},z^{-1/N}]]$,
\begin{equation}\label{vert11}
Y^M(a,z)=\sum_{m\in \frac1N\bZ } a_{(m)}^M z^{-m-1}, \qquad a\in V,
\end{equation}
subject to the following axioms. First, for every $a\in V$, $v\in M$, we have $a_{(m)}^M v=0$ for $m\gg0$. Next, $Y^M(\vac,z)=I$ and 
\begin{equation}\label{vert12}
Y^M(\sigma a , z)=Y^M(a, e^{2\pi\ii}z),
\end{equation}
where the meaning of the right-hand side is that we replace $z^{-m-1}$ with $e^{-2\pi\ii(m+1)} z^{-m-1}$ in each summand of \eqref{vert11}.
Explicitly, \eqref{vert12} means that if $a$ is an eigenvector of $\sigma$, then in \eqref{vert11} we only have terms with $m\in \frac1N\bZ$ such that
$\sigma a = e^{-2\pi\ii m} a$.
Finally, we have the twisted \emph{Borcherds identity} for any $a,b\in V$, $c\in M$, $k\in\bZ$, $m,n\in\frac1N\bZ$:
\begin{equation}\label{vert13}
\begin{split}
\sum_{j=0}^\infty & \binom{m}{j} (a_{(k+j)}b)_{(m+n-j)}^M c
= \sum_{i=0}^\infty \binom{k}{i} (-1)^i
a_{(m+k-i)}^M(b_{(n+i)}^Mc)
\\
&- \sum_{i=0}^\infty \binom{k}{i} (-1)^{k+i} \, b_{(n+k-i)}^M(a_{(m+i)}^Mc),
\end{split}
\end{equation}
provided that $\sigma a = e^{-2\pi\ii m} a$.

In particular, we have the \emph{commutator formula} for $a,b\in V$ and $m,n\in\frac1N\bZ$ such that $\sigma a = e^{-2\pi\ii m} a$:
\begin{equation}\label{vert14}
[a_{(m)}^M, b_{(n)}^M]=\sum_{j=0}^\infty \binom{m}{j} (a_{(j)}b)^M_{(m+n-j)}.
\end{equation}
The translation covariance properties \eqref{vert2} and \eqref{vert2b} remain valid for twisted modules.
However, formula \eqref{vert4} does not hold for twisted modules; it is replaced by \cite[Eq. (14.16)]{bombay}.

Let $V_1$ and $V_2$ be vertex algebras with finite-order automorphisms $\sigma_1$ and $\sigma_2$, respectively.
Then $\sigma_1\otimes\sigma_2$ is an automorphism of the tensor product vertex algebra $V_1\otimes V_2$.
Given $\sigma_i$-twisted $V_i$-modules $M_i$ ($i=1,2$), their tensor product $M_1\otimes M_2$
is a $\sigma_1\otimes\sigma_2$-twisted module of $V_1\otimes V_2$ with
\begin{equation}\label{vert15}
Y^{M_1\otimes M_2}(a\otimes b, z)=Y^{M_1}(a, z)\otimes Y^{M_2}(b,z),
\qquad a\in V_1, \; b\in V_2,
\end{equation}
see \cite{BE,FHL}.

\subsection{Lattice vertex algebras}\label{sec:lava}
Let $Q$ be an integral \emph{lattice} of rank $\ell$, i.e., a free abelian group of rank $\ell$ with a symmetric bilinear form $(\cdot|\cdot)\colon Q\times Q \rightarrow \mathbb{Z}.$ 
We will assume that $Q$ is even, which means that $|\alpha|^2=(\alpha|\alpha)\in 2\bZ$ for all $\alpha\in Q$.
Let $\fh=\mathbb{C}\otimes_{\mathbb{Z}} Q$ and extend the form $(\cdot|\cdot)$ to $\fh$ using bilinearity.

The \emph{Heisenberg Lie algebra} $\hat{\fh}$ is defined as $$\hat{\fh}=(\fh\otimes\mathbb{C}[t,t^{-1}])\oplus \mathbb{C}K$$
with the Lie brackets ($h, h' \in \fh$, $m,n\in \mathbb{Z}$):
\begin{equation}\label{heisbr}
[h\otimes t^m, h' \otimes t^n]=m\delta_{m,-n}(h|h')K, \qquad
[\hat{\fh}, K]=0.
\end{equation}
We will use the notation $h_{(m)} = h\otimes t^m$.
The Lie algebra $\hat{\fh}$ has a unique highest-weight representation on the so-called bosonic \emph{Fock space}
$$B=\Ind^{\hat{\fh}}_{\fh[t]\oplus\mathbb{C}K}\mathbb{C} \simeq S(t^{-1}\fh[t^{-1}]),$$
where $K$ acts as $I$ and $\fh[t]$ acts trivially on $\mathbb{C}$. 
The Fock space $B$ has the structure of a vertex algebra with a vacuum vector $\vac$ the highest-weight vector
and the state-field correspondence $Y$ defined as follows.
For $h\in\fh$, we identify $h$ with $h_{(-1)}\vac\in B$ and let
\begin{equation}\label{heis1}
Y(h,z)=\sum_{m\in \bZ}h_{(m)}z^{-m-1},
\qquad h\in\fh
\end{equation}
be the free boson fields.
All other fields in $B$ are obtained from them by applying repeatedly formula \eqref{vert4}; see \cite{curves,VAFB,li}.

There exists a bimultiplicative function $\varepsilon\colon Q\times Q\rightarrow \{\pm 1\}$ such that
\begin{equation}\label{eps}
\varepsilon(\alpha, \alpha)=(-1)^{|\alpha|^2(|\alpha|^2+1)/2},
\qquad \alpha \in Q.
\end{equation}
By bimultiplicativity, $\varepsilon$ satisfies
$$
\varepsilon(\alpha, \beta)\varepsilon(\beta, \alpha)=(-1)^{(\alpha|\beta)},
\qquad \alpha,\beta \in Q.
$$
We can use $\varepsilon$ to define the twisted group algebra $\bC_\varepsilon[Q]$ with basis $\{e^\alpha \}_{\alpha \in Q}$ and multiplication
$$e^\alpha e^\beta = \varepsilon(\alpha, \beta)e^{\alpha + \beta}.$$
The representation of $\hat{\fh}$ can be extended to the space $$V_Q=B\otimes \bC_\varepsilon[Q]$$ 
by the action
$$h_{(m)}( u\otimes e^\beta ) = (h_{(m)}u+\delta_{m,0}(h|\beta )u)\otimes e^\beta $$
for $h\in \fh$, $m\in \bZ$, $u \in B$ and $\beta \in Q$. 
In particular, note that $e^\beta$ is a highest-weight vector for the Heisenberg Lie algebra:
\begin{equation}\label{heishw}
h_{(m)} e^\beta = \delta_{m,0}(h|\beta)e^\beta, \qquad m\ge0, \;\; h\in \fh, \;\; \beta \in Q.
\end{equation}
We can also represent the algebra $\bC_\varepsilon[Q]$ on $V_Q$ by
$$e^\alpha(u\otimes e^\beta)=\varepsilon(\alpha, \beta)(u\otimes e^{\alpha+\beta})$$
for $ u \in B$ and $\alpha,\beta \in Q$.

For simplicity of notation, we will write $e^\alpha$ for $\vac\otimes e^\alpha \in V_Q$ and $h$ for $h_{(-1)}\vac \otimes e^0 \in V_Q$,
where $\alpha\in Q$ and $h\in\fh$. 
The space $V_Q$ has the structure of a vertex algebra called the \emph{lattice vertex algebra}, with a vacuum vector $\vac\otimes e^0$ 
and a state-field correspondence generated by the free boson fields \eqref{heis1} and the so-called vertex operators
\begin{equation}\label{heis2}
Y(e^\alpha,z)=e^\alpha z^{\alpha_{(0)}} \exp\Biggl(\sum_{n=1}^\infty \alpha_{(-n)}\frac{z^n}{n}\Biggr)\exp\Biggl(\sum_{n=1}^\infty \alpha_{(n)}\frac{z^{-n}}{-n}\Biggr).
\end{equation}
In this formula, $z^{\alpha_{(0)}}$ acts on $V_Q$ by 
$$z^{\alpha_{(0)}} (u\otimes e^\beta)=z^{(\alpha|\beta)}(u\otimes e^\beta), 
\qquad u\in B, \; \alpha,\beta \in Q.$$
For future use, we also recall the action of the translation operator:
\begin{equation}\label{Teal}
T e^\alpha = \alpha_{(-1)} e^\alpha, \qquad \alpha\in Q.
\end{equation}

Suppose $\sigma\in \Aut(Q)$ where $\sigma^N=I$ and extend $\sigma$ to $\fh$ by linearity. 
Then $\sigma$ lifts to an automorphism of the Heisenberg Lie algebra $\hat{\fh}$ by
$\sigma(h_{(m)}) = (\sigma h)_{(m)}$, and to an automorphism of the vertex algebra $B$ so that $\sigma\vac=\vac$.
Since the cocycles $\varepsilon(\alpha, \beta)$ and $\varepsilon(\sigma \alpha, \sigma \beta)$ are equivalent, there is a map $\eta\colon Q\rightarrow \{\pm 1\}$ such that 
$$\eta(\alpha)\eta(\beta)\varepsilon(\alpha, \beta)=\eta(\alpha+\beta)\varepsilon(\sigma \alpha, \sigma \beta),
\qquad \alpha,\beta\in Q.$$ 
The map $\eta$ can be chosen so that 
$\eta(\alpha)=1$ if $\sigma\alpha=\alpha$ (see \cite{BE}).
We can lift $\sigma$ to an automorphism of $V_Q$ by $\sigma(e^\alpha)=\eta(\alpha) e^{\sigma\alpha}$.
Notice that the order of the lift $\sigma\in\Aut(V_Q)$ is $N$ or $2N$.
The irreducible $\sigma$-twisted $V_Q$-modules were classified in \cite{bak} (see also \cite{dong,lep1}).

\section{Vertex operator representations of affine Kac--Moody algebras}\label{sec:voraff}
In this section, we construct representations of affine Kac--Moody algebras from twisted modules over affine vertex algebras. 

\subsection{Affine Kac--Moody algebras}\label{sec:km}

Let ${\fg}$ be a simple finite-dimensional Lie algebra of type $X_{\ell}$ where $X=A, B, C, \dotsc, G$. 
Recall that the \emph{affine Kac-Moody algebra} of type $X_{\ell}^{(1)}$ is defined as \cite{IDLA}:
$$\hat{\L}(\fg)= (\fg \otimes \bC[t, t^{-1}])\oplus \bC K \oplus \bC d,$$ with the Lie brackets
\begin{equation}\label{L(g)}
\begin{split}
[a\otimes t^m, b\otimes t^n]&=[a,b]\otimes t^{m+n}+m\delta_{m,-n}(a|b)K,\\
[K,\hat{\L}(\fg)]&=0,\\
[d, a \otimes t^m]&=m a\otimes t^m,
\end{split}
\end{equation}
for $a,b\in\fg$ and $m,n\in\bZ$.
Here $(\cdot|\cdot)$ is a nondegenerate symmetric invariant bilinear form on $\fg$, such as a scalar multiple of
the Killing form.
We will denote by $\hat\fg$ or $\hat{\L}'(\fg)$ the subalgebra of $\hat{\L}(\fg)$ given by $$\hat\fg=\hat{\L}'(\fg)= (\fg \otimes \bC[t, t^{-1}])\oplus \bC K.$$ 

Let  $\sigma$ be an automorphism of the Lie algebra $\fg$ of finite order $N$. We can define a subalgebra $\hat{\L}(\fg,\sigma)$ of $\hat{\L}(\fg)$ by
\begin{equation}\label{Lgs}
\hat{\L}(\fg,\sigma)=\bigoplus_{j \in \bZ}\L(\fg, \sigma)_{j}\oplus \bC K \oplus \bC d,\\
\end{equation}
where
\begin{equation}\label{gj}
\begin{split}
\L(\fg, \sigma)_{j}&=\fg_{j}  \otimes \bC t^j, \qquad j\in\bZ, \\
\fg_{j} &= \{ a\in\fg \,|\, \sigma a = e^{2\pi\ii j/N} a \}.
\end{split}
\end{equation}
When $\fg$ is simply laced (of type $X=A,D,E$) and $\sigma$ is a diagram automorphism of order $N=2$ or $3$, then the Lie algebra $\hat{\L}(\fg,\sigma)$ is known as the \textit{twisted affine Kac--Moody algebra} of type $X_{\ell}^{(N)}.$
We will denote by $\hat{\L}'(\fg,\sigma)$ the subalgebra of $\hat{\L}(\fg,\sigma)$ given by $$\hat{\L}'(\fg, \sigma)=\bigoplus_{j \in \bZ}\L(\fg, \sigma)_{j}\oplus \bC K.$$ 

If $\sigma$ is an arbitrary automorphism of $\fg$ (simply or non-simply laced) of finite order $N$, then there exists an associated diagram automorphism $\mu$ (where the order of $\mu$ is $1,2,$ or $3$, depending on $\fg$) and an inner automorphism $\varphi$ such that $\sigma=\mu\varphi$. Then the Lie algebra $\hat{\L}(\fg,\sigma)$ is isomorphic to $\hat{\L}(\fg,\mu)$; see \cite[Proposition 8.1, Theorem 8.5]{IDLA}.

\subsection{Affine vertex algebras}\label{sec:affva}
Suppose $\fg$ is a simple finite-dimensional Lie algebra, equipped with a nondegenerate symmetric invariant bilinear form $(\cdot|\cdot)$.
Consider the Lie algebra  $\hat\fg=\hat{\L}'(\fg)$  with the brackets given by the first two equations in \eqref{L(g)}. 
For a fixed $k\in \bC$ (called the \emph{level}),
consider the (generalized) Verma module for $\hat\fg$:
$$V_k(\fg)=\Ind^{\hat{\fg}}_{\fg[t]\oplus\mathbb{C}K}\mathbb{C},
\qquad $$
where $\fg[t]$ acts as $0$ on $\bC$ and $K$ acts as multiplication by $k$.
The module $V_k(\fg)$ has the structure of a vertex algebra \cite{fz}, called the 
\emph{universal affine vertex algebra} at level $k$.
The $\hat\fg$-module $V_k(\fg)$ has a unique irreducible quotient $V^k(\fg)$,
which is also a vertex algebra \cite{fz}, known as the 
\emph{simple affine vertex algebra} at level $k$.

Let us review the vertex algebra structure of $V=V_k(\fg)$; the same applies to $V=V^k(\fg)$ as well.
The vacuum vector $\vac$ is the highest-weight vector of the $\hat\fg$-module $V$.
We shall not review the full state-field correspondence $Y$ but only the generating fields.
For $a\in\fg$ and $n\in\bZ$, let $a_{(n)}$ act as $a\otimes t^n$ on $V$.
We embed $\fg$ in $V$ so that we identify $a\in\fg$ with  $a_{(-1)} \vac\in V$. 
Then we have the fields
$$Y(a,z)=\sum_{n\in \bZ}a_{(n)}z^{-n-1}, \qquad a\in \fg,$$
known as \emph{currents}.
All other fields in $V$ are obtained from them by applying repeatedly formula \eqref{vert4}; see \cite{curves,VAFB,li}.

For $a,b\in\fg\subset V$, their modes satisfy the commutation relations of the Lie algebra $\hat\fg$:
\begin{equation}\label{aff3}
[a_{(m)}, b_{(n)}]=[a,b]_{(m+n)}+m\delta_{m,-n} (a|b)k.
\end{equation}
By the commutator formula \eqref{vert6}, this is equivalent to the $j$-th products 
\begin{equation}\label{aff4}
a_{(0)}b=[a,b], \qquad
a_{(1)}b=(a|b)k\vac, \qquad
a_{(j)}b=0 \;\; (j\geq 2).
\end{equation}
One can show that a $V_k(\fg)$-module is the same as a $\hat\fg$-module $M$ with the property that
$a_{(n)} v =0$ for $a\in\fg$, $v\in M$ and $n\gg0$ (see \cite{fz,bombay,li}).
In the next subsection,
we will obtain a similar result for twisted $V_k(\fg)$-modules
(cf.\ \cite{bombay,KT}).

 \subsection{Twisted modules over $V_k(\fg)$ and twisted affine Lie algebras}\label{sec:twaf}
As in Section \ref{sec:km},
let ${\fg}$ be a simple finite-dimensional Lie algebra and $\sigma\in\Aut(\fg)$ such that $\sigma^N=I$. 
We can extend $\sigma$ uniquely to an automorphism of the universal affine vertex algebra $V_k(\fg)$ by 
\begin{equation}\label{aff5}
\sigma(\vac)=\vac, \quad
{\sigma}(a_{(m)})=(\sigma a)_{(m)}, \qquad a\in\fg, \; m\in \bZ.
\end{equation}

\begin{proposition}\label{prop:aff}
For any\/ $\sigma$-twisted\/ $V_k({\fg})$-module\/ $M$,
the Lie algebra spanned by the modes of the fields\/ $Y^M(a,z)$ for\/ $a \in {\fg}$ form a representation of the twisted affine Kac--Moody algebra\/ $\hat{\L}'({\fg},\sigma^{-1})$ on\/ $M$
of level\/ $k$.
\end{proposition}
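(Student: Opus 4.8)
The plan is to extract the Lie-algebra relations of $\hat{\L}'(\fg,\sigma^{-1})$ directly from the twisted commutator formula \eqref{vert14} applied to the currents. First I would decompose $\fg=\bigoplus_{\bar j\in\bZ/N\bZ}\fg_{j}$ into $\sigma$-eigenspaces as in \eqref{gj}, so that $\sigma a=e^{2\pi\ii j/N}a$ for $a\in\fg_j$; note this means $\sigma a=e^{-2\pi\ii m}a$ precisely when $m\in -\tfrac{j}{N}+\bZ$, equivalently $m\in\tfrac{j'}{N}+\bZ$ where $j'\equiv -j\pmod N$. Thus for $a\in\fg_j$ the field $Y^M(a,z)$ by the equivariance axiom \eqref{vert12} has only modes $a^M_{(m)}$ with $m\in -\tfrac{j}{N}+\bZ$; writing $a^M_{(m)}$ in the grading convention of $\hat\fg$ this is exactly the statement that the modes live in the component $\L(\fg,\sigma^{-1})_{\bar\jmath}$ — the sign reversal on the grading is the reason the statement features $\sigma^{-1}$ rather than $\sigma$, and I would flag this bookkeeping point early.

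Next, the main computation: take $a\in\fg_j$ (so $\sigma a=e^{-2\pi\ii m}a$ for the relevant $m$) and $b\in\fg$, and apply \eqref{vert14}. Since in $V_k(\fg)$ the $n$-th products of elements of $\fg$ are given by \eqref{aff4}, namely $a_{(0)}b=[a,b]$, $a_{(1)}b=(a|b)k\vac$, and $a_{(n)}b=0$ for $n\ge 2$, the sum over $j$ in \eqref{vert14} collapses to just two terms:
\begin{equation*}
[a^M_{(m)},b^M_{(n)}]=[a,b]^M_{(m+n)}+\binom{m}{1}\bigl((a|b)k\vac\bigr)^M_{(m+n-1)}.
\end{equation*}
Using $Y^M(\vac,z)=I$, i.e.\ $\vac^M_{(s)}=\delta_{s,-1}I$, the second term is $m\,(a|b)k\,\delta_{m+n,0}I$, so
\begin{equation*}
[a^M_{(m)},b^M_{(n)}]=[a,b]^M_{(m+n)}+m\,\delta_{m,-n}\,(a|b)\,k\,I,
\end{equation*}
which is precisely the defining bracket of $\hat{\L}'(\fg,\sigma^{-1})$ at level $k$ once one checks the bracket is $\bZ/N$-graded compatibly: if $a\in\fg_j$, $b\in\fg_{j_1}$ then $[a,b]\in\fg_{j+j_1}$, matching the mode-index addition $m+n$. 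I would also record that the modes of $\vac$ give the central element $K\mapsto k\,I$, so the representation has level $k$; there is no $d$ in $\hat{\L}'$, so nothing further is needed.

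To finish I would verify the two loose ends. (i) \emph{Spanning / closure}: the linear span over $\bC$ of all $\{a^M_{(m)} : a\in\fg,\ m\in\tfrac1N\bZ\text{ admissible}\}$ together with $I$ is closed under bracket by the displayed formula, so it is a Lie subalgebra of $\End M$, and the assignment $a\otimes t^m\mapsto a^M_{(m)}$, $K\mapsto k\,I$ is a surjective Lie homomorphism from $\hat{\L}'(\fg,\sigma^{-1})$ onto it; that is what "form a representation" means. (ii) \emph{Lower-truncation / field property}: for each $a\in\fg$ and $v\in M$ we have $a^M_{(m)}v=0$ for $m\gg 0$ by the first axiom of a twisted module, so the $Y^M(a,z)$ are genuine fields on $M$ and the infinite sums defining modes act locally finitely — this is the analogue of the untwisted statement recalled after \eqref{aff4}. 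The only mildly delicate point, and the one I'd present most carefully, is the grading/sign reconciliation in the first paragraph: one must be consistent about whether "$a$ is an eigenvector with $\sigma a=e^{2\pi\ii j/N}a$" forces $m\equiv -j/N$ or $m\equiv j/N$, and it is this single sign that turns $\sigma$ into $\sigma^{-1}$ in the conclusion; everything else is a direct substitution into \eqref{vert14}.
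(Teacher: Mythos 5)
Your proposal is correct and follows essentially the same route as the paper: both reduce the bracket of modes to the twisted commutator formula \eqref{vert14} combined with the $j$-th products \eqref{aff4}, identify $K\mapsto kI$, and trace the appearance of $\sigma^{-1}$ to the sign in the eigenvalue condition $\sigma a = e^{-2\pi\ii m}a$. The only difference is bookkeeping --- the paper parametrizes eigenvectors by $\sigma a = e^{-2\pi\ii j/N}a$ from the outset, so $a\otimes t^j$ maps directly to $a^M_{(j/N)}$ without the sign reconciliation you perform in your first paragraph.
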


\begin{proof}
We will define a Lie algebra homomorphism from $\hat{\L}'({\fg},\sigma^{-1})$ to the modes of the fields $Y^M(a,z)$. By linearity, since $\sigma$ is diagonalizable,
we can assume that $a \in {\fg}$ is an eigenvector of $\sigma$. Suppose that $\sigma a=e^{-2\pi\ii j/N}a$ for some $j\in\bZ$.
Then by \eqref{gj}, we have $a\otimes t^j \in \hat{\L}({\fg},\sigma^{-1})_j$.
On the other hand, the property \eqref{vert12} of twisted modules implies that in the expansion \eqref{vert11} of $Y^M(a,z)$, we only have modes $a^M_{(m)}$
such that $m\in\frac{j}N+\bZ$. We define a linear map from $\hat{\L}'({\fg},\sigma^{-1})$ to the span of these modes, by sending $a\otimes t^j$ to $a^M_{(j/N)}$.
We also send $K$ to $kI$ as a linear operator on $M$.

To check that this is a Lie algebra homomorphism, we compute the commutator of modes, which is given by the $\sigma$-twisted commutator formula \eqref{vert14}.
Using \eqref{aff4}, we obtain
$$[a^M_{(m)}, b^M_{(n)}]=[a,b]^M_{(m+n)}+m\delta_{m,-n} (a|b)k,$$
for $a,b\in\fg$ and $m,n\in \frac1N\bZ$ such that $\sigma a=e^{-2\pi\ii m}a$ and $\sigma b=e^{-2\pi\ii n}b$.
As this coincides with the Lie bracket \eqref{L(g)} in $\hat{\L}'({\fg},\sigma^{-1})$ with $K=kI$, the claim follows.
\end{proof}
\begin{remark}
When $\sigma$ is an inner automorphism of $\fg$, then $\hat{\L}'({\fg},\sigma^{-1}) \simeq \hat{\L}'({\fg}) = \hat\fg$
is an untwisted affine Lie algebra. 
\end{remark}

Suppose now $\fg$ is a simple Lie algebra of type $X_\ell$ where $X=A, D, E$ (simply laced). Let $\Delta$ be its root system and $Q=\bZ\Delta$ its root lattice. 
Let $\sigma$ be an automorphism of the lattice $Q$ of finite order $N$. 
Recall from Section \ref{sec:lava} that we can use $Q$ to construct the lattice vertex algebra $V_Q$, and we can lift $\sigma$ to an automorphism of the lattice vertex algebra $V_Q$. 

\begin{corollary}[cf.\ \cite{112, bombay}] \label{cor112}
For any $\sigma$-twisted\/ $V_Q$-module\/ $M$,
the modes of the free bosons\/ $Y^M(h,z)$ for\/ $h \in \fh$ and the modes of the vertex operators\/ $Y^M(e^\alpha,z)$ for\/ $\alpha \in \Delta$ 
span a representation of the twisted affine Kac--Moody algebra\/ $\hat{\L}'({\fg},\sigma^{-1})$ on\/ $M$. 
\end{corollary}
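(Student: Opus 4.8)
The plan is to deduce Corollary \ref{cor112} from Proposition \ref{prop:aff} by realizing the simply-laced affine Kac--Moody algebra $\hat{\L}'(\fg)$ inside the lattice vertex algebra $V_Q$, via the Frenkel--Kac construction, and then checking that the lift of $\sigma$ to $V_Q$ restricts to the desired automorphism of $\fg\subset V_Q$. First I would recall that when $\fg$ is of type $A$, $D$, or $E$ with root lattice $Q$, there is an embedding of vertex algebras $V_1(\fg)\hookrightarrow V_Q$ (at level $k=1$): under this embedding a Cartan element $h\in\fh$ is sent to $h_{(-1)}\vac\otimes e^0$, a root vector $E_\alpha$ (for $\alpha\in\Delta$) is sent to $e^\alpha$, and the currents $Y(h,z)$, $Y(e^\alpha,z)$ of $V_Q$ in \eqref{heis1}, \eqref{heis2} restrict on $\fg$ precisely to the affine currents. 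Concretely, the modes $h_{(m)}$, $(e^\alpha)_{(m)}$ close under \eqref{vert14} into the relations \eqref{aff3} with $k=1$ (this is the classical Frenkel--Kac theorem; the $\varepsilon$-cocycle in $\bC_\varepsilon[Q]$ is chosen exactly so that the sign conventions make $\{h_{(-1)}\vac,\ e^\alpha\}$ a copy of $\fg$). Thus $\fg$ sits inside $V_Q$ as the span of $\{h_{(-1)}\vac : h\in\fh\}\cup\{e^\alpha : \alpha\in\Delta\}$.

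Next I would identify the restriction of the lifted automorphism. By the construction in Section \ref{sec:lava}, the lift $\sigma\in\Aut(V_Q)$ acts by $\sigma(h_{(-1)}\vac)=(\sigma h)_{(-1)}\vac$ on $\fh$ and by $\sigma(e^\alpha)=\eta(\alpha)e^{\sigma\alpha}$ on the $e^\alpha$. Since $\sigma$ preserves $Q$ and $\Delta=\{\alpha\in Q : |\alpha|^2=2\}$ (as $\fg$ is simply laced), $\sigma$ permutes $\Delta$; hence $\sigma$ preserves the subspace $\fg\subset V_Q$ described above and restricts to a Lie algebra automorphism of $\fg$ of order $N$ (the signs $\eta(\alpha)\in\{\pm1\}$ give an inner correction, so the order of the restriction to $\fg$ divides $N$, and it equals $N$ or divides $N$ — either way it is finite and we may call it $\sigma|_\fg$). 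The embedding $V_1(\fg)\hookrightarrow V_Q$ is $\sigma$-equivariant by naturality, so it sends the extension \eqref{aff5} of $\sigma|_\fg$ to the lifted $\sigma$ on $V_Q$.

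With these two ingredients in place, the corollary follows almost immediately: given a $\sigma$-twisted $V_Q$-module $M$, restriction along $V_1(\fg)\hookrightarrow V_Q$ makes $M$ a $(\sigma|_\fg)$-twisted $V_1(\fg)$-module, so Proposition \ref{prop:aff} applies and tells us that the modes of $Y^M(a,z)$ for $a\in\fg$ — that is, the modes of $Y^M(h,z)$ for $h\in\fh$ together with the modes of $Y^M(e^\alpha,z)$ for $\alpha\in\Delta$ — span a representation of $\hat{\L}'(\fg,(\sigma|_\fg)^{-1})$ of level $k=1$. Finally, since $\sigma$ and $\sigma|_\fg$ differ only by an inner automorphism of $\fg$ (coming from the $\eta$-twist, which acts on each $\fg_j$-eigenspace by a Cartan-torus element), the Remark after Proposition \ref{prop:aff} — more precisely \cite[Prop. 8.1, Thm. 8.5]{IDLA} — gives $\hat{\L}'(\fg,(\sigma|_\fg)^{-1})\simeq\hat{\L}'(\fg,\sigma^{-1})$, which is the asserted algebra. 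The main obstacle here is not a deep calculation but a bookkeeping one: verifying carefully that the Frenkel--Kac embedding is compatible with the chosen cocycle $\varepsilon$ and the chosen $\eta$, so that the lifted $\sigma$ on $V_Q$ genuinely restricts to an honest automorphism of the finite-dimensional $\fg$ and differs from the "naive" permutation-of-roots automorphism only by an inner factor. Once that compatibility is pinned down, everything else is a direct appeal to Proposition \ref{prop:aff}.
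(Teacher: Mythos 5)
Your proposal is correct and follows essentially the same route as the paper: identify $V_Q$ with the level-one affine vertex algebra via the Frenkel--Kac construction, pull the $\sigma$-twisted module back to $V_1(\fg)$, and apply Proposition \ref{prop:aff}. One small correction: the natural map $V_1(\fg)\to V_Q\simeq V^1(\fg)$ is a surjection onto the simple quotient rather than an embedding (the universal level-one affine vertex algebra is not simple), but this does not affect your argument, since restricting a twisted module along that homomorphism works in either case.
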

\begin{proof}
By the Frenkel--Kac construction \cite{fk}, the lattice vertex algebra $V_Q$ is isomorphic to the simple affine vertex algebra $V^1({\fg})$ (see \cite[Theorem 5.6 (c)]{VAFB}). 
The map $\sigma$ induces an automorphism of $\fg$ and hence of $V^1({\fg})$. Recall that $V^1({\fg})$ is a quotient of the universal affine vertex algebra $V_1({\fg})$.
Thus any $\sigma$-twisted $V^1({\fg})$-module $M$ is also a $\sigma$-twisted module for $V_1({\fg})$. The claim then follows from Proposition \ref{prop:aff}.
\end{proof}

\begin{remark}
If $\fg_0$ is a non-simply laced Lie algebra, then $\fg_0$ can be embedded in a simply laced Lie algebra $\fg$ as the set of fixed points under a diagram automorphism $\mu$ of $\fg$.
We can use Corollary \ref{cor112} to construct representations of $\hat{\L}'({\fg},\sigma^{-1})$ and then restrict them to the subalgebra $\hat{\L}'(\fg_0)$; see \cite{BT, GNOS}. 
Level-one representations of affine Kac--Moody algebras associated to non-simply laced Lie algebras have been explored in papers such as \cite{kroode, misra1, misra2, xj}. 
\end{remark}

\section{Vertex operator representations of toroidal Lie algebras}\label{sec:vortor}
In this section, we construct representations of toroidal Lie algebras from twisted modules over a tensor product of an affine vertex algebra and a certain lattice vertex algebra.

\subsection{Untwisted toroidal Lie algebras}\label{sec:2.1}
Through the rest of the paper, we fix a positive integer $r$ and a simple finite-dimensional Lie algebra $\fg$ equipped with a nondegenerate symmetric invariant bilinear form $(\cdot|\cdot)$.
We will use variables $t_0,t_1,\dots,t_r$ and multi-index notation
$$
\bt=(t_1,\dots,t_r), \quad \bm=(m_1,\dots,m_r) \in \bZ^r, \quad \bt^\bm = t_1^{m_1}\cdots t_{r}^{m_{r}}.
$$
Consider the loop algebra $\L_{r+1}(\fg)$ in $r+1$ variables
$$\L_{r+1}(\fg)=\fg\otimes\O, \qquad \O=\mathbb{C}[t_0^{\pm1},t_1^{\pm 1},\dots ,t_{r}^{\pm 1}],$$
 with the Lie bracket given by ($a, b \in \fg$, $m_0, n_0\in \bZ$, $\bm,\bn\in \bZ^r$):
\begin{align*}[a\otimes t_0^{m_0}\bt^\bm, b \otimes t_0^{n_0}\bt^\bn]&=[a,b]\otimes t_0^{m_0+n_0}\bt^{\bm+\bn} .
\end{align*}
Next, we create a central extension $\hat{\L}'_{r+1}(\fg)$ of $\L_{r+1}(\fg)$: 
$$\hat{\L}'_{r+1}(\fg)=\L_{r+1}(\fg)\oplus \K,$$ 
where
\begin{equation}\label{defk}
\K=\Bigl(\bigoplus_{i=0}^{r} \bC K_i\otimes \O \Bigr) \Big/ 
\Span_{\bC}\Bigl\{\sum_{i=0}^{r}m_i K_i\otimes t_0^{m_0} \bt^\bm \,\Big|\, m_i\in \bZ\Bigr\}.
\end{equation}
The Lie brackets in $\hat{\L}'_{r+1}(\fg)$ are given by:
\begin{align}\label{2.1}
&[a \otimes t_0^{m_0}\bt^\bm,b \otimes t_0^{n_0}\bt^\bn]=[a,b]\otimes t_0^{m_0+n_0}\bt^{\bm+\bn}\\
\nonumber
&\quad \quad \quad \quad \quad\quad\quad\quad\quad\quad+(a|b)\sum_{i=0}^{r}m_i K_i\otimes t_0^{m_0+n_0}\bt^{\bm+\bn},
\\ \label{2.2}
&[\K, \hat{\L}'_{r+1}(\fg)]=0.
\end{align}
By Kassel's Theorem, the central extension $\hat{\L}'_{r+1}(\fg)$ of $\L_{r+1}(\fg)$ is universal \cite{kassel} 
(setting $K_i=t_i^{-1} dt_i$ allows us to identify $\K$ with $\O/d\O$).
The Lie algebra $\hat{\L}'_{r+1}(\fg)$ is known as the (untwisted) \emph{toroidal Lie algebra} \cite{mry}. 

It will be convenient to slightly modify the definition of $\hat{\L}'_{r+1}(\fg)$ as follows.
For a given complex number $k$ (called the \emph{level}), we replace the bracket \eqref{2.1} with:
\begin{equation}\label{2.3}
\begin{split}
&[a \otimes t_0^{m_0}\bt^\bm,b \otimes t_0^{n_0}\bt^\bn]=[a,b]\otimes t_0^{m_0+n_0}\bt^{\bm+\bn}\\
&\quad \quad \quad \quad \quad\quad\quad\quad\quad\quad+k(a|b) \sum_{i=0}^{r}m_i K_i\otimes t_0^{m_0+n_0}\bt^{\bm+\bn}.
\end{split}
\end{equation}
The resulting Lie algebra $\L_{r+1}(\fg)\oplus \K$ with brackets \eqref{2.2}, \eqref{2.3} will be denoted as $\hat{\L}'_{r+1,k}(\fg)$
and called the (untwisted) \emph{toroidal Lie algebra of level $k$}. Notice that, for $k\ne0$, formulas \eqref{2.1} and \eqref{2.3} 
are equivalent after rescaling the bilinear form $(\cdot|\cdot)$ or rescaling the central elements $K_0,\dots,K_r$.

Now we will add derivations to our toroidal Lie algebra, as in \cite{yb}. 
We let $\D$ be the Lie algebra of derivations of $\O$ given by 
$$\D= \Bigl\{\sum_{i=0}^{r}d_{i}\otimes f_i \,\Big|\, f_i \in \O \Bigr\}, \qquad d_i=t_i \frac{\partial}{\partial t_i} \,,$$
and $\D_+$ be the subalgebra of $\D$ given by
$$\D_+= \Bigl\{\sum_{i=1}^{r}d_{i}\otimes f_i \,\Big|\, f_i \in \O \Bigr\}.$$
Then the elements of $\D_+$ extend uniquely to derivations of the Lie algebra $\hat{\L}'_{r+1}(\fg)$ by ($a \in \fg$, $1\le i,j\le r$):
\begin{align*}
(d_i\otimes t_0^{m_0}\bt^\bm) (a\otimes t_0^{n_0}\bt^\bn) &=n_i a\otimes t_0^{m_0+n_0}\bt^{\bm+\bn}, \\
(d_i\otimes t_0^{m_0}\bt^\bm) (K_j\otimes t_0^{n_0}\bt^\bn) &=n_i K_j\otimes t_0^{m_0+n_0}\bt^{\bm+\bn} \\
\nonumber
&\quad \quad \quad  +\delta_{i,j}\sum_{l=0}^{r}m_l K_l \otimes t_0^{m_0+n_0}\bt^{\bm+\bn}.
\end{align*}


The Lie algebra we will consider in this paper, which we will refer to again as a \emph{toroidal Lie algebra of level $k$}, is  $$\hat{\L}_{r+1,k}(\fg)=\hat{\L}'_{r+1,k}(\fg)\oplus \D_+$$
with the Lie brackets given by \eqref{2.2}, \eqref{2.3} and ($a \in \fg$, $m_l, n_l \in \bZ$, $1\le i,j\le r$):
\begin{align} \label{d1}
[d_{i}\otimes t_0^{m_0}\bt^\bm, a\otimes t_0^{n_0}\bt^\bn]&=n_i a \otimes  t_0^{m_0+n_0}\bt^{\bm+\bn},\\
\label{d2}
[d_{i}\otimes t_0^{m_0}\bt^\bm, K_j\otimes t_0^{n_0}\bt^\bn]&=n_i K_j\otimes t_0^{m_0+n_0}\bt^{\bm+\bn}   \\
\nonumber
&\quad +\delta_{i,j}\sum_{l=0}^{r}m_l K_l\otimes t_0^{m_0+n_0}\bt^{\bm+\bn},\\
\label{d3}
[d_{i}\otimes t_0^{m_0}\bt^\bm, d_{j}\otimes t_0^{n_0}\bt^\bn]&=n_i d_j \otimes t_0^{m_0+n_0}\bt^{\bm+\bn} \\
\nonumber
 - \, m_j d_i\otimes t_0^{m_0+n_0}\bt^{\bm+\bn} &-n_i m_j \sum_{l=0}^{r} m_l K_l\otimes t_0^{m_0+n_0}\bt^{\bm+\bn}.
\end{align}
The last term in \eqref{d3} corresponds to a $\K$-valued $2$-cocycle on $\D_+$. For more information on derivations of toroidal Lie algebras and $2$-cocycles, see \cite{sbm}. 
Notice that, for $k\ne0$, if we rescale the generators $K_0,\dots,K_r$ in order to replace \eqref{2.3} with \eqref{2.1}, the $2$-cocycle gets rescaled by $1/k$.

\subsection{Twisted toroidal Lie algebras}\label{sec:2.2}
As before, fix a level $k\in\bC$, and let $\sigma$ be an automorphism of order $N$ of a simple finite-dimensional Lie algebra $\fg$.
As in \eqref{gj}, we denote by $\fg_j$ $(j\in\bZ)$ the eigenspace of $\sigma$ with eigenvalue $e^{2\pi\ii j/N}$.
Note that the $\sigma$-invariance of $(\cdot|\cdot)$ implies
\begin{equation} \label{eigenorth}
(a|b)=0, \qquad a\in\fg_m, \;\; b\in\fg_n, \;\; m+n\not\equiv 0 \mod N.
\end{equation}
Consider the subalgebra $\L_{r+1}(\fg,\sigma)$ of the loop algebra $\L_{r+1}(\fg)$ given by
$$\L_{r+1}(\fg,\sigma)=\bigoplus_{m_0 \in \bZ}\L_{r+1}(\fg, \sigma)_{m_0},$$
where
$$\L_{r+1}(\fg, \sigma)_{m_0}=\fg_{{m}_0}  \otimes \Span_\bC \bigl\{t_0^{m_0} \bt^\bm\, \big| \, \bm\in \bZ^r \bigr\}.
$$
Let 
$$\K'= \frac{ \Span_{\bC}\bigl\{K_i\otimes t_0^{Nm_0} \bt^\bm \, \big| \, m_0\in \bZ, \; \bm\in \bZ^r,\; i=0,\dots,r \bigr\} }
{ \Span_{\bC}\bigl\{ \bigl(N m_0 K_0 + \sum_{i=1}^{r}m_i K_i \bigr)\otimes t_0^{N m_0} \bt^\bm \,\big|\, m_i\in \bZ\bigr\} }\,.$$
We can identify $\K'$ as the subspace of $\K$ given by the image of
$$\Span_{\bC}\bigl\{K_i\otimes t_0^{Nm_0} \bt^\bm \, \big| \, m_0\in \bZ, \; \bm\in \bZ^r,\; i=0,\dots,r \bigr\}$$
under the quotient map $\bigoplus_{i=0}^{r} \bC K_i\otimes \O \to \K$ (cf.\ \eqref{defk}).
Then the central extension 
$$\hat{\L}_{r+1,k}'(\fg, \sigma)=\L_{r+1}(\fg, \sigma)\oplus \K' \subset \hat{\L}_{r+1,k}'(\fg)$$ is a subalgebra of $\hat{\L}_{r+1,k}'(\fg)$,
thanks to \eqref{2.3}, \eqref{eigenorth}.
When $\sigma$ is a diagram automorphism, $\hat{\L}_{r+1,1}'(\fg, \sigma)$ is known as the \emph{twisted toroidal Lie algebra} \cite{FJ}.
We will continue to use that terminology for an arbitrary finite-order automorphism $\sigma$ of $\fg$.

As in Section \ref{sec:2.1}, we can add to $\hat{\L}_{r+1,k}'(\fg, \sigma)$ a subalgebra of the Lie algebra of derivations $\D$ of $\O$. We define
\begin{equation}\label{ttla}
\hat{\L}_{r+1,k}(\fg, \sigma)=\hat{\L}_{r+1,k}'(\fg, \sigma) \oplus \D'_+
\end{equation}
where
$$\D'_+=\Span_{\bC}\bigl\{d_{i}\otimes t_0^{Nm_0}\bt^\bm \, \big| \, m_0\in \bZ, \; \bm\in \bZ^r,\;  i=1,\dots,r \bigr\}.$$
It is easy to see from \eqref{d1}--\eqref{d3} that $\hat{\L}_{r+1,k}(\fg, \sigma)$ is a subalgebra of the toroidal Lie algebra $\hat{\L}_{r+1,k}(\fg)$.
We will also call $\hat{\L}_{r+1,k}(\fg, \sigma)$ the \emph{twisted toroidal Lie algebra of level $k$}.

\subsection{Twisted modules over $V_k(\fg)\otimes V_J$ and twisted toroidal Lie algebras}
We will now explore the relationship between twisted toroidal Lie algebras in $r+1$ variables and the tensor product of the universal affine vertex algebra $V_k(\fg)$
with the lattice vertex algebra corresponding to $r$ copies of a certain rank-$2$ lattice. The level $k\in\bC$ will be fixed through the end of the section.

As before, consider an automorphism $\sigma$ of finite order $N$ of a simple finite-dimensional Lie algebra $\fg$, and the twisted toroidal Lie algebra (of level $k$) $\hat{\L}_{r+1,k}(\fg, \sigma)$ defined by \eqref{ttla}.
For $i=1,\dots,r$, let $J_i$ be the lattice given by 
\begin{equation}\label{J_i}
J_i=\bZ\delta^i\oplus \bZ \Lambda^i_0, \qquad
(\delta^i | \Lambda^i_0)=1, \quad (\delta^i|\delta^i)=(\Lambda^i_0|\Lambda^i_0)=0.
\end{equation} 
We define a bimultiplicative function $\varepsilon\colon J_i\times J_i\rightarrow \{\pm 1\}$ satisfying \eqref{eps} by $\varepsilon(\delta^i, \Lambda^i_0)=-1$ and $\varepsilon=1$ for all other pairs of generators. 
Then we can form the lattice vertex algebra $V_{J_i}$ as in Section \ref{sec:lava}.

Introduce the orthogonal direct sum \begin{equation}\label{J2}J=J_1\oplus\dots\oplus J_r,\end{equation} and extend $\varepsilon$ to $J\times J$ by $\varepsilon (\delta^i, \Lambda^j_0)=-1$ for $i = j$  and $\varepsilon=1$ for all other pairs of generators. 
Then the lattice vertex algebra $V_J$ is isomorphic to the tensor product:
\begin{equation}\label{J3}V_J\simeq V_{J_1} \otimes\dots\otimes V_{J_r}.\end{equation}
As preparation for our main theorem, we need to calculate some $n$-th products in $V_J$. 
We will use the notation
$$\bp\bd=\sum_{i=1}^{r}p_i \delta^i, \qquad \bp=(p_1,\dots,p_r) \in \bZ^r.$$

\begin{lemma}\label{lem:vjprod}
The lattice vertex algebra\/ $V_J$ has the following $n$-th products for\/ $\bp,\bq\in\bZ^r$ and\/ $i,j= 1, \dots, r\colon$
\begin{align*} 
e^{\bp\bd}_{(-1)}e^{\bq\bd}&=e^{(\bp+\bq)\bd}, \\
e^{\bp\bd}_{(-2)}e^{\bq\bd}&=(\bp\bd)_{(-1)}e^{(\bp+\bq)\bd},\\
\bigl(u_{(-1)}e^{\bp\bd}\bigr)_{(n)}\bigl(v_{(-1)}e^{\bq\bd}\bigr)&=0 \;\;\text{ for }\; n \geq 0, \;\; u,v\in\{\vac,\delta^1,\dots,\delta^r\}, \\
\bigl({\Lambda^i_0}_{(-1)}e^{\bp\bd}\bigr)_{(0)}e^{\bq\bd}&=q_i\,e^{(\bp+\bq)\bd}, \\ \bigl({\Lambda^i_0}_{(-1)}e^{\bp\bd}\bigr)_{(n)}e^{\bq\bd}&=0 \;\;\text{ for }\; n \geq 1,\\
\bigl({\Lambda^i_0}_{(-1)}e^{\bp\bd}\bigr)_{(0)}\bigl(\delta^j_{(-1)}e^{\bq\bd}\bigr)&=q_i\delta^j_{(-1)}e^{(\bp+\bq)\bd}+\delta_{i,j}(\bp\bd)_{(-1)}e^{(\bp+\bq)\bd},\\
\bigl({\Lambda^i_0}_{(-1)}e^{\bp\bd}\bigr)_{(1)}\bigl(\delta^j_{(-1)}e^{\bq\bd}\bigr)&=\delta_{i,j}e^{(\bp+\bq)\bd},\\
\bigl({\Lambda^i_0}_{(-1)}e^{\bp\bd}\bigr)_{(n)}\bigl(\delta^j_{(-1)}e^{\bq\bd}\bigr)&=0 \;\;\text{ for }\; n \geq 2,\\
\bigl({\Lambda^i_0}_{(-1)}e^{\bp\bd}\bigr)_{(0)}\bigl({\Lambda^j_0}_{(-1)}e^{\bq\bd}\bigr)&=\bigl(-p_j{\Lambda^i_0}_{(-1)}+q_i{\Lambda^j_0}_{(-1)}-q_ip_j(\bp\bd)_{(-1)}\bigr)e^{(\bp+\bq)\bd},\\
\bigl({\Lambda^i_0}_{(-1)}e^{\bp\bd}\bigr)_{(1)}\bigl({\Lambda^j_0}_{(-1)}e^{\bq\bd}\bigr)&=-q_ip_je^{(\bp+\bq)\bd},\\
\bigl({\Lambda^i_0}_{(-1)}e^{\bp\bd}\bigr)_{(n)}\bigl({\Lambda^j_0}_{(-1)}e^{\bq\bd}\bigr)&=0 \;\;\text{ for }\; n \geq 2.
\end{align*} 
\end{lemma}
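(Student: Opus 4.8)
The lattice $J$ is a direct sum of $r$ hyperbolic planes $J_i = \bZ\delta^i \oplus \bZ\Lambda_0^i$, and since $J$ is of rank $2r$ with $(\bp\bd|\bp\bd)=0$ for all $\bp$ (because each $\delta^i$ is isotropic and the $J_i$ are mutually orthogonal), the vectors $e^{\bp\bd}$ are the exponentials of isotropic lattice elements. The key simplifying feature is that $(\bp\bd|\bq\bd)=0$ for \emph{all} $\bp,\bq\in\bZ^r$, so $z^{(\bp\bd)_{(0)}}$ acts as the identity on $e^{\bq\bd}$ and all the vertex operators $Y(e^{\bp\bd},z)$ are mutually local with no nontrivial powers of $z$ in front. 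Concretely I would start from the explicit formula \eqref{heis2} for $Y(e^{\bp\bd},z)$ acting on $V_J$, together with the boson fields \eqref{heis1}, and extract the relevant modes.

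\textbf{Step 1: the products among the $e^{\bp\bd}$.} Apply \eqref{heis2} to $Y(e^{\bp\bd},z)e^{\bq\bd}$. Since $(\bp\bd|\bq\bd)=0$, the factor $z^{(\bp\bd)_{(0)}}$ disappears and the annihilation exponential $\exp(\sum_{n\ge1}(\bp\bd)_{(n)}z^{-n}/(-n))$ acts as $1$ on $e^{\bq\bd}$ (by \eqref{heishw}). Thus
\[
Y(e^{\bp\bd},z)e^{\bq\bd} = \varepsilon(\bp\bd,\bq\bd)\,e^{(\bp+\bq)\bd}\exp\Bigl(\sum_{n\ge1}(\bp\bd)_{(-n)}\tfrac{z^n}{n}\Bigr)\vac,
\]
which is a power series in $z$ with no negative powers and constant term $\varepsilon(\bp\bd,\bq\bd)\,e^{(\bp+\bq)\bd}$ and linear term $\varepsilon(\bp\bd,\bq\bd)(\bp\bd)_{(-1)}e^{(\bp+\bq)\bd}$. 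Reading off coefficients of $z^{-n-1}$ gives $e^{\bp\bd}_{(n)}e^{\bq\bd}=0$ for $n\ge 0$, $e^{\bp\bd}_{(-1)}e^{\bq\bd}=\varepsilon(\bp\bd,\bq\bd)e^{(\bp+\bq)\bd}$, and $e^{\bp\bd}_{(-2)}e^{\bq\bd}=\varepsilon(\bp\bd,\bq\bd)(\bp\bd)_{(-1)}e^{(\bp+\bq)\bd}$. Finally one checks $\varepsilon(\bp\bd,\bq\bd)=1$: by bimultiplicativity it is a product of $\varepsilon(\delta^i,\delta^j)$'s, all of which equal $1$ by the definition of $\varepsilon$ on $J$ (only $\varepsilon(\delta^i,\Lambda_0^i)=-1$). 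This yields the first two identities.

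\textbf{Step 2: products involving one or two $\Lambda_0$'s.} Use \eqref{vert4} with $m=0$ to write $\Lambda_{0\,(-1)}^i e^{\bp\bd} = (\Lambda_0^i)_{(-1)} e^{\bp\bd}$ as a state whose field is the normally ordered product ${:}Y(\Lambda_0^i,z)Y(e^{\bp\bd},z){:}$. To get its modes acting on $\delta^j_{(-1)}e^{\bq\bd}$ or on $\Lambda_{0\,(-1)}^j e^{\bq\bd}$, I would instead use the commutator formula \eqref{vert6}: for any $a\in\fh$, $[a_{(m)},Y(e^{\bp\bd},z)] = (a|\bp\bd)\,z^m\,Y(e^{\bp\bd},z)$, which follows from \eqref{heis2} and \eqref{heisbr}. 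Equivalently, in terms of the $(-1)$-st product identity \eqref{vert3}, $Y(a_{(-1)}e^{\bp\bd},z) = {:}Y(a,z)Y(e^{\bp\bd},z){:}$, and the normal ordering together with $(a|\bp\bd)$ handling the "contraction" term gives everything. The relevant inner products are $(\Lambda_0^i|\bq\bd)=q_i$, $(\Lambda_0^i|\delta^j_{(-1)}\cdots) $ contributing $\delta_{i,j}$, and $(\Lambda_0^i|\Lambda_0^j)=0$. Carefully tracking the two sources of $z$-dependence — the explicit $z^n$ from $(a|\bp\bd)z^m$ in the commutator, and the creation exponential $\exp(\sum(\bp\bd)_{(-n)}z^n/n)$ — and again using $\varepsilon(\bp\bd,\bq\bd)=1$, one reads off each of the remaining nine identities by extracting the coefficient of the appropriate power of $z$. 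The normally ordered product ${:}Y(\Lambda_0^i,z)e^{\ldots}{:}$ produces at most a quadratic polynomial in $z$ when paired against a vector built from one $\delta^j$ and one $\Lambda_0^j$, so all $n$-th products vanish for $n\ge 2$ as claimed.

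\textbf{The main obstacle.} There is no conceptual difficulty; the work is entirely in bookkeeping. The one place that requires care is the $\bigl(\Lambda_{0\,(-1)}^i e^{\bp\bd}\bigr)_{(n)}\bigl(\Lambda_{0\,(-1)}^j e^{\bq\bd}\bigr)$ family, where one must combine (i) the commutator $[{\Lambda_0^i}_{(m)}, Y(e^{\bp\bd},z)] = (\Lambda_0^i|\bq\bd)z^m Y = q_i z^m Y$ — note the inner product is with $\bq\bd$ since it acts after moving past $Y(e^{\bp\bd},z)$, giving the $q_i{\Lambda_0^j}_{(-1)}$ term — (ii) the "opposite-order" contraction producing $-p_j{\Lambda_0^i}_{(-1)}$, and (iii) the cross term $-q_ip_j(\bp\bd)_{(-1)}$ coming from contracting $\Lambda_0^i$ against the creation exponential while simultaneously contracting $\Lambda_0^j$ against $Y(e^{\bp\bd},z)$; getting the three signs and the three coefficients consistent is the only subtle computation. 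Everything else reduces to the two basic facts that $\bp\bd$ is isotropic and $\varepsilon$ is trivial on the $\delta^i$-sublattice.
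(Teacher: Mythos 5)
Your proposal is correct and follows essentially the same route as the paper: the authors likewise expand $Y\bigl({\Lambda^i_0}_{(-1)}e^{\bp\bd},z\bigr)$ via the $(-1)$-st product identity \eqref{vert3} and the explicit vertex operator \eqref{heis2}, then use the Heisenberg brackets \eqref{heisbr}, the highest-weight property \eqref{heishw}, and $\varepsilon(\delta^i,\delta^j)=1$ to read off the coefficients of the negative powers of $z$, leaving the remaining products to ``similar reasoning.'' One cosmetic slip in your final paragraph: the commutator $[{\Lambda^i_0}_{(m)},Y(e^{\bp\bd},z)]$ equals $p_i\,z^m\,Y(e^{\bp\bd},z)$ (the pairing is with $\bp\bd$), not $q_i z^m Y$ --- the $q_i$ you want arises instead from the normally ordered annihilation mode ${\Lambda^i_0}_{(0)}$ acting on $e^{\bq\bd}$, which is what your surrounding narrative (and the paper's computation) actually uses.
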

\begin{proof}
The $n$-th product $\bigl({\Lambda^i_0}_{(-1)}e^{\bp\bd}\bigr)_{(n)}\bigl({\Lambda^j_0}_{(-1)}e^{\bq\bd}\bigr)$ is the coefficient of $z^{-n-1}$ in $Y\bigl({\Lambda^i_0}_{(-1)}e^{\bp\bd},z \bigr){\Lambda^j_0}_{(-1)}e^{\bq\bd}.$ 
Applying the $(-1)$-st product identity \eqref{vert3} and the definition \eqref{heis2} of vertex operators, we find:
\begin{align*}
Y(&{\Lambda^i_0}_{(-1)}e^{\bp\bd},z){\Lambda^j_0}_{(-1)}e^{\bq\bd}\\
&={:}Y(\Lambda^i_0,z)Y(e^{\bp\bd},z){:} \,{\Lambda^j_0}_{(-1)}e^{\bq\bd}\\
&=\Biggl(\sum_{l<0}{\Lambda^i_0}_{(l)}z^{-l-1}\Biggr)e^{\bp\bd} z^{(\bp\bd)_{(0)}}\exp\Biggl(\sum_{m>0} (\bp\bd)_{(-m)}\frac{z^m}{m}\Biggr) \times \\
&\quad \quad \quad \quad \quad \quad \quad \quad \times\exp\Biggl(\sum_{m>0} (\bp\bd)_{(m)}\frac{z^{-m}}{-m}\Biggr){\Lambda^j_0}_{(-1)}e^{\bq\bd}\\
&+e^{\bp\bd} z^{(\bp\bd)_{(0)}}\exp\Biggl(\sum_{m>0} (\bp\bd)_{(-m)}\frac{z^m}{m}\Biggr) \exp\Biggl(\sum_{m>0} (\bp\bd)_{(m)}\frac{z^{-m}}{-m}\Biggr) \times\\
&\quad \quad \quad \quad \quad \quad \quad \quad \times \Biggl(\sum_{l\geq 0}{\Lambda^i_0}_{(l)}z^{-l-1}\Biggr){\Lambda^j_0}_{(-1)} e^{\bq\bd} .
\end{align*}
Next, we use the brackets \eqref{heisbr} in the Heisenberg Lie algebra and its action \eqref{heishw} on its highest-weight vectors, to get:
$$
[(\bp\bd)_{(m)}, {\Lambda^j_0}_{(-1)}] = \delta_{m,1} p_j, \qquad
[{\Lambda^i_0}_{(l)}, {\Lambda^j_0}_{(-1)}] = 0,
$$
and
$$
(\bp\bd)_{(m)} e^{\bq\bd} = 0, \qquad
{\Lambda^i_0}_{(l)} e^{\bq\bd} = \delta_{l,0} q_i e^{\bq\bd}, \qquad m,l\ge0.
$$
Moreover, since $\varepsilon(\delta^i,\delta^j)=1$, we have $e^{\bp\bd} e^{\bq\bd} = e^{(\bp+\bq)\bd}$. From here, we obtain:
\begin{align*}
Y(&{\Lambda^i_0}_{(-1)}e^{\bp\bd},z){\Lambda^j_0}_{(-1)}e^{\bq\bd}\\
&=\Biggl(\sum_{l<0}{\Lambda^i_0}_{(l)}z^{-l-1}\Biggr)\exp\Biggl(\sum_{m>0} (\bp\bd)_{(-m)}\frac{z^m}{m}\Biggr) \bigl({\Lambda^j_0}_{(-1)}-p_jz^{-1}\bigr) e^{(\bp+\bq)\bd}\\
&+\exp\Biggl(\sum_{m>0} (\bp\bd)_{(-m)}\frac{z^m}{m}\Biggr) \bigl(q_i{\Lambda^j_0}_{(-1)}z^{-1}-q_ip_jz^{-2}\bigr) e^{(\bp+\bq)\bd}.
\end{align*}
To find $\bigl({\Lambda^i_0}_{(-1)}e^{\bp\bd}\bigr)_{(n)}\bigl({\Lambda^j_0}_{(-1)}e^{\bq\bd}\bigr)$ for $n \geq 0$, we extract the terms with negative powers of $z$ from the above expression:
$$
\bigl( -p_j{\Lambda^i_0}_{(-1)} z^{-1}+q_i{\Lambda^j_0}_{(-1)} z^{-1}-q_ip_j z^{-2}-q_ip_j(\bp\bd)_{(-1)}z^{-1} \bigr)
e^{(\bp+\bq)\bd} 
$$
This gives us
\begin{align*}
\bigl({\Lambda^i_0}_{(-1)}e^{\bp\bd}\bigr)_{(0)}\bigl({\Lambda^j_0}_{(-1)}e^{\bq\bd}\bigr)&=\bigl(-p_j{\Lambda^i_0}_{(-1)}+q_i{\Lambda^j_0}_{(-1)}-q_ip_j(\bp\bd)_{(-1)}\bigr)e^{(\bp+\bq)\bd}, \\
\bigl({\Lambda^i_0}_{(-1)}e^{\bp\bd}\bigr)_{(1)}\bigl({\Lambda^j_0}_{(-1)}e^{\bq\bd}\bigr)&=-q_ip_je^{(\bp+\bq)\bd},\\
\bigl({\Lambda^i_0}_{(-1)}e^{\bp\bd}\bigr)_{(n)}\bigl({\Lambda^j_0}_{(-1)}e^{\bq\bd}\bigr)&=0 \;\;\text{ for }\; n \geq 2.
\end{align*} 
The other $n$-th products follow from similar reasoning.
\end{proof}

Recall the universal affine vertex algebra $V_k({\fg})$ of level $k$, defined in Section \ref{sec:affva}.
We will consider the tensor product of vertex algebras $V_k({\fg})\otimes V_J$; see \eqref{vert9}.
We can extend $\sigma$ to an automorphism ${\sigma}$ of $V_k({\fg})$ of order $N$ as described in \eqref{aff5}.
Then we extend ${\sigma}$ to an automorphism of the vertex algebra $V_k(\fg)\otimes V_J$, again of order $N$, by letting
\begin{equation}\label{sigma}{\sigma}(a\otimes b)={\sigma} a\otimes b.\end{equation}
Let $M$ be a $\sigma$-twisted $V_k(\fg)$-module, and $M'$ be a $V_J$-module (untwisted). 
Then $\M=M\otimes M'$ is a $\sigma$-twisted $V_k(\fg)\otimes V_J$-module with a state-field correspondence $Y^{\M}$ given by \eqref{vert15}.
Now we can formulate our main theorem, which uses twisted $V_k(\fg)\otimes V_J$-modules to create representations of the twisted toroidal Lie algebra.

\begin{theorem}\label{thm:main}
Let\/ $\sigma$ be an automorphism of order\/ $N$ of a simple finite-dimensional Lie algebra\/ $\fg$, 
and let\/ $\M=M\otimes M'$ be a\/ $\sigma$-twisted\/ $V_k(\fg)\otimes V_J$-module.
Then the Lie algebra of modes of the fields 
\begin{align*}&Y^{\M}(a\otimes e^{\bp\bd}, z), \quad \quad \quad \quad Y^{\M}(\vac\otimes e^{\bp\bd},z),\\
&Y^{\M}(\vac\otimes \delta^i_{(-1)} e^{\bp\bd},z),  \quad \quad Y^{\M}(\vac\otimes {\Lambda^i_0}_{(-1)} e^{\bp\bd},z),
\end{align*} 
where\/ $a \in {\fg}$ and\/ $\bp\in \bZ^r$, form a representation of the twisted toroidal Lie algebra\/ \/ $\hat{\L}_{r+1,k}({\fg},\sigma^{-1})$ of level\/ $k$ on\/ $\M$.
Explicitly, we have a Lie algebra homomorphism\/ $\phi\colon\hat{\L}_{r+1,k}({\fg},\sigma^{-1})\to\End\M$ given by$:$
  \begin{equation}\label{phi}
  \begin{split}
  a\otimes t_0^{m}\bt^\bp&\mapsto \bigl(a \otimes e^{\bp\bd}\bigr)^{\M}_{(\frac{m}{N})}, \\
  K_0\otimes t_0^{Nm}\bt^\bp&\mapsto \frac1N \bigl(\vac\otimes e^{\bp\bd}\bigr)^{\M}_{({m}-1)}, \\
    K_i\otimes t_0^{Nm}\bt^\bp&\mapsto \bigl(\vac\otimes \delta^i_{(-1)}e^{\bp\bd}\bigr)^{\M}_{({m})}, \\
  d_{i}\otimes t_0^{Nm}\bt^\bp&\mapsto \bigl(\vac\otimes {\Lambda^i_0}_{(-1)}e^{\bp\bd}\bigr)^{\M}_{({m})},
  \end{split}
  \end{equation}
for\/ $\bp\in\bZ^r$, $1\le i\le r$, and\/ $a\in\fg$, $m\in \bZ$ such that\/ $\sigma a = e^{-2\pi\ii m/N} a$. 
\end{theorem}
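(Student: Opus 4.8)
The plan is to verify directly that the explicit map $\phi$ of \eqref{phi} is a well-defined Lie algebra homomorphism $\hat{\L}_{r+1,k}(\fg,\sigma^{-1})\to\End\M$; its image is then automatically a Lie subalgebra of $\End\M$ spanned by the modes listed in the statement, which is the assertion. Two things must be checked. \emph{Well-definedness}: the subspaces $\L_{r+1}(\fg,\sigma^{-1})$ and $\D'_+$ carry no relations, so the only issue is that $\K'$ is a quotient, and I must show $\phi$ annihilates each spanning element $\bigl(Nm_0K_0+\sum_{i=1}^rm_iK_i\bigr)\otimes t_0^{Nm_0}\bt^\bm$. Setting $\bp=\bm$, the point is that $\sum_{i=1}^r m_i\delta^i=\bp\bd$, so by \eqref{Teal} we get $\sum_i m_i\,\delta^i_{(-1)}e^{\bp\bd}=(\bp\bd)_{(-1)}e^{\bp\bd}=Te^{\bp\bd}$, whence $\vac\otimes\sum_i m_i\,\delta^i_{(-1)}e^{\bp\bd}=T(\vac\otimes e^{\bp\bd})$ in $V_k(\fg)\otimes V_J$. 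Translation covariance \eqref{vert2b}, valid for twisted modules, then gives $\bigl(T(\vac\otimes e^{\bp\bd})\bigr)^{\M}_{(m_0)}=-m_0(\vac\otimes e^{\bp\bd})^{\M}_{(m_0-1)}$, and since $\phi\bigl(Nm_0K_0\otimes t_0^{Nm_0}\bt^\bp\bigr)=m_0(\vac\otimes e^{\bp\bd})^{\M}_{(m_0-1)}$, the spanning element maps to $0$.

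\emph{Homomorphism property}: I would verify $\phi([x,y])=[\phi(x),\phi(y)]$ on all pairs of generators, the tools being the twisted commutator formula \eqref{vert14}, the tensor mode formula \eqref{vert10}, the affine products \eqref{aff4}, and the $n$-th products of Lemma \ref{lem:vjprod}. The useful reduction is that in every pair except the $\fg$–$\fg$ one, at least one generator maps to a mode of a vector $\vac\otimes v$ with $v\in V_J$; such a vector is $\sigma$-fixed, hence has integer-indexed modes, so placing it first in \eqref{vert14} makes the eigenvector hypothesis automatic, and since its $V_k(\fg)$-factor is $\vac$, by $\vac_{(\ell)}w=\delta_{\ell,-1}w$ the sum \eqref{vert10} collapses to $(\vac\otimes v)_{(s)}(w\otimes v')=w\otimes v_{(s)}v'$. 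Thus each such bracket is determined by a single $V_J$ product from Lemma \ref{lem:vjprod}: two central generators give $v_{(s)}v'=0$ for $s\ge0$ (third line), so their bracket vanishes as \eqref{2.2} demands; $d_i$ paired with a central generator uses $\bigl({\Lambda^i_0}_{(-1)}e^{\bp\bd}\bigr)_{(s)}\bigl(\delta^j_{(-1)}e^{\bq\bd}\bigr)$ and $\bigl({\Lambda^i_0}_{(-1)}e^{\bp\bd}\bigr)_{(s)}e^{\bq\bd}$ to reproduce \eqref{d2}; $d_i$ paired with $d_j$ uses $\bigl({\Lambda^i_0}_{(-1)}e^{\bp\bd}\bigr)_{(s)}\bigl({\Lambda^j_0}_{(-1)}e^{\bq\bd}\bigr)$ for $s=0,1$ to reproduce \eqref{d3}, cocycle term included; and $d_i$ paired with $a\otimes e^{\bq\bd}$ uses only $\bigl({\Lambda^i_0}_{(-1)}e^{\bp\bd}\bigr)_{(0)}e^{\bq\bd}=q_ie^{(\bp+\bq)\bd}$ to reproduce \eqref{d1}. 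Each resulting operator is then rewritten through $\phi$ by the identities $(\vac\otimes\delta^l_{(-1)}e^{\bp\bd})^{\M}_{(m')}=\phi(K_l\otimes t_0^{Nm'}\bt^\bp)$ and $(\vac\otimes e^{\bp\bd})^{\M}_{(m'-1)}=N\phi(K_0\otimes t_0^{Nm'}\bt^\bp)$, which are \eqref{phi} itself.

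For the $\fg$–$\fg$ bracket I would place $a\otimes e^{\bp\bd}$ first in \eqref{vert14}, its $\sigma$-eigenvalue $e^{-2\pi\ii m/N}$ matching the mode index $m/N$, and expand $(a\otimes e^{\bp\bd})_{(j)}(b\otimes e^{\bq\bd})=\sum_\ell a_{(\ell)}b\otimes e^{\bp\bd}_{(j-\ell-1)}e^{\bq\bd}$. By \eqref{aff4} and the first two lines of Lemma \ref{lem:vjprod}, only $(j,\ell)\in\{(0,0),(0,1),(1,1)\}$ contribute, producing respectively $[a,b]\otimes e^{(\bp+\bq)\bd}$, the term $(a|b)k\,\vac\otimes(\bp\bd)_{(-1)}e^{(\bp+\bq)\bd}$ (from $e^{\bp\bd}_{(-2)}e^{\bq\bd}=(\bp\bd)_{(-1)}e^{(\bp+\bq)\bd}$), and $\binom{m/N}{1}(a|b)k\,\vac\otimes e^{(\bp+\bq)\bd}$. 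Via \eqref{phi} these match the three summands of $\phi$ applied to the right-hand side of \eqref{2.3} — read with $\sigma$ replaced by $\sigma^{-1}$ — namely $\phi([a,b]\otimes\cdots)$, $\phi\bigl(k(a|b)\sum_{i=1}^r p_iK_i\otimes\cdots\bigr)$, and $\phi(k(a|b)\,m\,K_0\otimes\cdots)$; here \eqref{eigenorth} is invoked to see that whenever $(a|b)\ne0$ the total $t_0$-degree $m+n$ is a multiple of $N$, so the mode indices $(m+n)/N$ and $(m+n)/N-1$ in the last two summands are integers and $\phi$ applies, while if $(a|b)=0$ those summands vanish identically. This step is the toroidal counterpart of the proof of Proposition \ref{prop:aff}.

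The computations are largely bookkeeping, so I expect the main obstacle to be making every central and cocycle coefficient come out exactly right: the factor $\frac1N$ in $\phi(K_0\otimes\cdots)$ must cancel against the factor $N$ linking the exponent $t_0^{Nm}$ to the mode index $m$ and against the factors $m$ produced both by \eqref{vert2b} and by the $j=1$ term of \eqref{vert14}, so that the $2$-cocycle in \eqref{d3} and the central term in \eqref{2.3} are reproduced exactly rather than up to a scalar. The one genuinely structural point, as opposed to bookkeeping, is the well-definedness step: it is precisely the relation $Te^{\bp\bd}=(\bp\bd)_{(-1)}e^{\bp\bd}$ together with the defining relation of $\K'$ that forces the normalization $\frac1N(\vac\otimes e^{\bp\bd})^{\M}_{(m-1)}$ for the image of $K_0\otimes t_0^{Nm}\bt^\bp$ in \eqref{phi}.
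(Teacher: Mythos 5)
Your proposal is correct and follows essentially the same route as the paper's proof: well-definedness on $\K'$ via translation covariance \eqref{vert2b} together with \eqref{Teal}, then the homomorphism property by matching the twisted commutator formula \eqref{vert14} against \eqref{2.2}--\eqref{d3} using \eqref{aff4} and Lemma \ref{lem:vjprod}. You are in fact slightly more explicit than the paper on two points it leaves implicit --- the collapse of \eqref{vert10} when one tensor factor is $\vac$, and the role of \eqref{eigenorth} in guaranteeing that the central mode indices are integers whenever $(a|b)\ne0$ --- and you handle both correctly.
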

\begin{proof}
Recall that $\L_{r+1}({\fg},\sigma^{-1})_m$ for $m\in\bZ$ is spanned by all elements of the form $a\otimes t_0^{m}\bt^\bp$ where $\bp\in\bZ^r$ and $a\in\fg$ with $\sigma^{-1} a = e^{2\pi\ii m/N} a$.
The latter condition is equivalent to $\sigma a = e^{-2\pi\ii m/N} a$. Hence, $\hat{\L}_{r+1,k}({\fg},\sigma^{-1})$ is spanned by all elements in the left-hand side of \eqref{phi}, subject to the relations:
$$
Nm K_0\otimes t_0^{Nm}\bt^\bp + \sum_{i=1}^r p_i K_i\otimes t_0^{Nm}\bt^\bp = 0.
$$
For the map $\phi$ to be well defined, we need to check that
$$
Nm \frac1N \bigl(\vac\otimes e^{\bp\bd}\bigr)^{\M}_{({m}-1)} + \sum_{i=1}^r p_i \bigl(\vac\otimes \delta^i_{(-1)}e^{\bp\bd}\bigr)^{\M}_{({m})} = 0,
$$
or equivalently,
$$
m\bigl(\vac\otimes e^{\bp\bd}\bigr)^{\M}_{({m}-1)} + \bigl(\vac\otimes (\bp\bd)_{(-1)}e^{\bp\bd}\bigr)^{\M}_{({m})} = 0.
$$
This follows from the translation covariance property \eqref{vert2b}:
\begin{align*}
-m\bigl(\vac\otimes e^{\bp\bd}\bigr)^{\M}_{(m-1)} 
&=\bigl( T(\vac\otimes e^{\bp\bd})\bigr)^{\M}_{(m)} 
=\bigl( \vac\otimes T e^{\bp\bd}\bigr)^{\M}_{(m)} \\
&=\bigl(\vac\otimes (\bp\bd)_{(-1)}e^{\bp\bd}\bigr)^{\M}_{({m})},
\end{align*}
where we used \eqref{Teal} and that $T$ acts as $T\otimes I+I\otimes T$ on a tensor product of vertex algebras. 

To show that $\phi$ is a homomorphism, we need to check that the defining Lie brackets \eqref{2.2}--\eqref{d3} of the twisted toroidal Lie algebra $\hat{\L}_{r+1,k}({\fg},\sigma^{-1})$
match with the commutators of modes in the right-hand side of \eqref{phi}. The latter are determined by the commutator formula \eqref{vert14}.
We can apply \eqref{vert14} because $\sigma a = e^{-2\pi\ii m/N} a$ implies $\sigma (a \otimes e^{\bp\bd}) = e^{-2\pi\ii m/N} (a \otimes e^{\bp\bd})$, since $\sigma$ acts
as the identity on $V_J$.
Similarly, if $b\in\fg$ and $n\in\bZ$ satisfy $\sigma b = e^{-2\pi\ii n/N} a$, then $\sigma (b \otimes e^{\bq\bd}) = e^{-2\pi\ii n/N} (b \otimes e^{\bq\bd})$ for $\bq\in\bZ^r$.
Hence,
\begin{align*}
\Bigl[&(a\otimes e^{\bp\bd})^{\M}_{(\frac mN)}, (b\otimes e^{\bq\bd})^{\M}_{(\frac nN)}\Bigr]
=\sum_{l = 0}^\infty \binom{\frac mN}{l} \bigl((a\otimes e^{\bp\bd})_{(l)}(b\otimes e^{\bq\bd})\bigr)^{\M}_{(\frac mN+\frac nN-l)}.
\end{align*}

The $l$-th product $(a\otimes e^{\bp\bd})_{(l)}(b\otimes e^{\bq\bd})$ is the coefficient of $z^{-l-1}$ in the expression
$$Y(a\otimes e^{\bp\bd},z)(b\otimes e^{\bq\bd})
=Y(a,z)b\otimes Y(e^{\bp\bd},z)e^{\bq\bd}.$$
We are only interested in the negative powers of $z$. By the products \eqref{aff4} in the affine vertex algebra $V_k(\fg)$, we have
$$
Y(a,z)b =  (a|b) k \vac z^{-2} + [a,b] z^{-1} + \cdots,
$$
where $\cdots$ denote terms with higher powers of $z$.
The products in the lattice vertex algebra $V_J$ were computed in Lemma \ref{lem:vjprod}. In particular, we have
$$
Y(e^{\bp\bd},z)e^{\bq\bd} =  e^{(\bp+\bq)\bd} + (\bp\bd)_{(-1)}e^{(\bp+\bq)\bd} z + \cdots.
$$
Putting these together, we get
\begin{align*}
Y(a &\otimes e^{\bp\bd},z)(b\otimes e^{\bq\bd})
=z^{-2} (a|b)k\vac \otimes e^{(\bp+\bq)\bd} \\
&+ z^{-1} [a,b]\otimes e^{(\bp+\bq)\bd}
+z^{-1} (a|b)k\vac \otimes (\bp\bd)_{(-1)}e^{(\bp+\bq)\bd}+\cdots.
\end{align*}
Hence, we obtain the bracket
\begin{align*}
\bigl[&\phi\bigl( a\otimes t_0^{m}\bt^\bp \bigr), \phi\bigl( b\otimes t_0^{n}\bt^\bq \bigr)\bigr] \\
&=\Bigl[\bigl(a\otimes e^{\bp\bd}\bigr)^{\M}_{(\frac mN)}, \bigl(b\otimes e^{\bq\bd}\bigr)^{\M}_{(\frac nN)}\Bigr] \\
&=\bigl([a,b]\otimes e^{(\bp+\bq)\bd}\bigr)^{\M}_{(\frac mN+\frac nN)}
+(a|b)k\bigl(\vac\otimes (\bp\bd)_{(-1)} e^{(\bp+\bq)\bd}\bigr)^{\M}_{(\frac mN+\frac nN)} \\
&\quad+\frac mN (a|b)k\bigl(\vac\otimes e^{(\bp+\bq)\bd}\bigr)^{\M}_{(\frac mN+\frac nN-1)} \\
&=\phi\bigl( [a,b]\otimes t_0^{m+n}\bt^{\bp+\bq} \bigr)
+ k(a|b) \sum_{i=1}^r p_i \phi\bigl(K_i\otimes t_0^{m+n}\bt^{\bp+\bq} \bigr) \\
&\quad+k(a|b) m\phi\bigl(K_0\otimes t_0^{m+n}\bt^{\bp+\bq} \bigr).
 \end{align*}
By \eqref{2.1}, the last expression is exactly $\phi( [a\otimes t_0^{m}\bt^\bp, b\otimes t_0^{n}\bt^\bq] )$.

We compute the other brackets in a similar fashion, and we obtain for
$m,n\in\bZ$, $a\in\bC\vac\oplus\fg$, $1\le i,j\le r$, $\bp,\bq\in\bZ^r$:
\begin{align*}
&\Bigl[\bigl(\vac\otimes u_{(-1)} e^{\bp\bd}\bigr)^{\M}_{(m)}, \bigl(a \otimes e^{\bq\bd}\bigr)^{\M}_{(\frac nN)}\Bigr] \\
&\quad\quad = \Bigl[\bigl(\vac\otimes u_{(-1)} e^{\bp\bd}\bigr)^{\M}_{(m)}, \bigl(\vac\otimes \delta^j_{(-1)} e^{\bq\bd}\bigr)^{\M}_{(n)}\Bigr]= 0, \qquad u\in \{\vac,\delta^i\}, \\
&\Bigl[\bigl(\vac\otimes {\Lambda^i_0}_{(-1)}e^{\bp\bd}\bigr)^{\M}_{(m)}, \bigl(a \otimes e^{\bq\bd}\bigr)^{\M}_{(\frac nN)}\Bigr]=q_i\bigl(a\otimes e^{(\bp+\bq)\bd}\bigr)^{\M}_{(m+\frac nN)}, \\
&\Bigl[\bigl(\vac\otimes {\Lambda_0^i}_{(-1)}e^{\bp\bd}\bigr)^{\M}_{(m)}, \bigl(\vac\otimes \delta^j_{(-1)}e^{\bq\bd}\bigr)^{\M}_{(n)}\Bigr]\\&\quad \quad=q_i\bigl(\vac\otimes \delta^j_{(-1)}e^{(\bp+\bq)\bd} \bigr)^{\M}_{(m+n)}
+\delta_{i,j} \bigl(\vac\otimes (\bp\bd)_{(-1)}e^{(\bp+\bq)\bd} \bigr)^{\M}_{(m+n)} \\
&\quad \quad \quad+m\delta_{i,j} \bigl(\vac\otimes e^{(\bp+\bq)\bd}\bigr)^{\M}_{(m+n-1)},\\
&\Bigl[\bigl(\vac\otimes {\Lambda^i_0}_{(-1)}e^{\bp\bd}\bigr)^{\M}_{(m)}, \bigl(\vac\otimes {\Lambda^j_0}_{(-1)}e^{\bq\bd}\bigr)^{\M}_{(n)}\Bigr]\\
&\quad \quad =q_i\bigl(\vac\otimes {\Lambda^j_0}_{(-1)}e^{(\bp+\bq)\bd}\bigr)^{\M}_{(m+n)}-p_j\bigl(\vac\otimes {\Lambda^i_0}_{(-1)}e^{(\bp+\bq)\bd}\bigr)^{\M}_{(m+n)}\\
&\quad \quad \quad -q_ip_j\bigl(\vac\otimes (\bp\bd)_{(-1)}e^{(\bp+\bq)\bd}\bigr)^{ \M}_{(m+n)}-mq_ip_j\bigl(\vac\otimes e^{(\bp+\bq)\bd}\bigr)^{\M}_{(m+n-1)}.
\end{align*}
A close inspection shows that these brackets agree with the brackets \eqref{2.2}--\eqref{d3} in the twisted toroidal Lie algebra $\hat{\L}_{r+1,k}({\fg},\sigma^{-1})$.
This completes the proof of the theorem.
  \end{proof}

\subsection{Twisted modules over lattice vertex algebras and twisted toroidal Lie algebras}
 
Now let $\fg$ be a simple Lie algebra of type $X_\ell$ where $X=A, D, E$ (simply laced), with a root system $\Delta$ and a root lattice $Q=\bZ\Delta$. 
Let $\sigma$ be an automorphism of the lattice $Q$ of finite order $N$. 

Consider the orthogonal direct sum of lattices
$$
L=Q\oplus J,
$$
where, as before, $J$ is defined by \eqref{J_i}, \eqref{J2}. 
We extend $\sigma$ to an automorphism of $L$, acting as the identity on $J$, and we lift it to an automorphism of the lattice vertex algebra 
$$
V_L \simeq V_Q \otimes V_J.
$$ 
Finally, let
$$ 
\fH = \bC\otimes_\bZ L = \fh\oplus\Span\{ \delta^i,\Lambda_0^i \}_{1\le i\le r},
$$
where $\fh = \bC\otimes_\bZ Q$ is the Cartan subalgebra of $\fg$.

With the above notation, using the Frenkel--Kac construction \cite{fk}, we can reformulate Theorem \ref{thm:main} as follows.

\begin{corollary}\label{last}
For any\/ $\sigma$-twisted\/ $V_L$-module\/ $\M$, the Lie algebra of modes of the fields 
$$
Y^{\M}(e^{\alpha+\bp\bd}, z), \quad  Y^{\M}(h_{(-1)}e^{\bp\bd}, z)
\qquad (\alpha\in\Delta\cup\{0\}, \; h\in\fH, \; \bp\in\bZ^r)
$$
form a representation of the twisted toroidal Lie algebra\/ $\hat{\L}_{r+1,1}({\fg},\sigma^{-1})$ of level\/ $1$ on\/ $\M$. 
\end{corollary}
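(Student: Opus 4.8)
The plan is to combine the Frenkel--Kac realization of $V_Q$ with Theorem~\ref{thm:main}, exactly as Corollary~\ref{cor112} was deduced from Proposition~\ref{prop:aff}. First I would recall that, by the Frenkel--Kac construction \cite{fk} (see \cite[Theorem~5.6(c)]{VAFB}), the lattice vertex algebra $V_Q$ is isomorphic to the simple affine vertex algebra $V^1(\fg)$, and that under this isomorphism the automorphism $\sigma$ of $V_Q$ lifted from $\Aut(Q)$ corresponds to the extension via \eqref{aff5} of the induced automorphism of $\fg$ (this is the order-$N$-or-$2N$ lift discussed in Section~\ref{sec:lava}, and its compatibility is precisely what was used in Corollary~\ref{cor112}). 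Since $\sigma$ acts as the identity on $J$, and $V_L\simeq V_Q\otimes V_J$ as stated, we obtain a $\sigma$-equivariant isomorphism $V_L\simeq V^1(\fg)\otimes V_J$ with $\sigma$ acting as $\sigma\otimes I$ on the right-hand side.

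Next I would observe that $V^1(\fg)\otimes V_J$ is a quotient of $V_1(\fg)\otimes V_J$: tensoring the quotient map $V_1(\fg)\twoheadrightarrow V^1(\fg)$ with $\mathrm{id}_{V_J}$ gives a surjective vertex algebra homomorphism intertwining the $\sigma$-actions. Consequently, any $\sigma$-twisted $V_L$-module $\M$ is, through this map, a $\sigma$-twisted $V_1(\fg)\otimes V_J$-module. Theorem~\ref{thm:main} with $k=1$ then produces a Lie algebra homomorphism $\phi\colon\hat{\L}_{r+1,1}(\fg,\sigma^{-1})\to\End\M$ whose image is the Lie algebra of modes of the four families of fields listed there.

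It remains to match those fields with the ones in the corollary. Under the Frenkel--Kac isomorphism $V_Q\simeq V^1(\fg)$, the currents $Y(a,z)$, $a\in\fg$, become the free boson fields $Y(h,z)$ for $h\in\fh$ (since $h$ is identified with $h_{(-1)}\vac$) together with the vertex operators $Y(e^\alpha,z)$ for $\alpha\in\Delta$ (since the root vector $e_\alpha$ is identified with $e^\alpha$); and under $V_Q\otimes V_J\simeq V_L$ we have $h_{(-1)}\vac\otimes e^{\bp\bd}\mapsto h_{(-1)}e^{\bp\bd}$ (with $h\in\fh\subseteq\fH$), $e^\alpha\otimes e^{\bp\bd}\mapsto e^{\alpha+\bp\bd}$, $\vac\otimes e^{\bp\bd}\mapsto e^{\bp\bd}=e^{0+\bp\bd}$, $\vac\otimes\delta^i_{(-1)}e^{\bp\bd}\mapsto\delta^i_{(-1)}e^{\bp\bd}$, and $\vac\otimes{\Lambda^i_0}_{(-1)}e^{\bp\bd}\mapsto{\Lambda^i_0}_{(-1)}e^{\bp\bd}$. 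Since $a\mapsto Y^{\M}(a,z)$ is linear and $\fH=\fh\oplus\Span\{\delta^i,\Lambda^i_0\}_{1\le i\le r}$, the modes of the fields in Theorem~\ref{thm:main} span exactly the same operators on $\M$ as the modes of the fields $Y^{\M}(e^{\alpha+\bp\bd},z)$ ($\alpha\in\Delta\cup\{0\}$) and $Y^{\M}(h_{(-1)}e^{\bp\bd},z)$ ($h\in\fH$); transporting $\phi$ through these identifications yields the asserted representation of $\hat{\L}_{r+1,1}(\fg,\sigma^{-1})$ of level $1$. The only point that requires genuine care — and it is handled exactly as in Corollary~\ref{cor112} — is checking the compatibility of the two a priori different lifts of $\sigma$ (from $\Aut(Q)$ to $\Aut(V_Q)$ versus from $\Aut(\fg)$ to $\Aut(V^1(\fg))$ via \eqref{aff5}, including the possible order-$2N$ phenomenon); once that is granted, everything else is the bookkeeping described above.
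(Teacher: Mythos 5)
Your argument is correct and is essentially the paper's own: the paper simply says the proof is the same as that of Corollary \ref{cor112}, i.e., use the Frenkel--Kac isomorphism $V_Q\simeq V^1(\fg)$ to view a $\sigma$-twisted $V_L$-module as a $\sigma$-twisted $V_1(\fg)\otimes V_J$-module and then apply Theorem \ref{thm:main} with $k=1$. Your write-up just makes explicit the field-matching bookkeeping and the compatibility of the two lifts of $\sigma$, which the paper leaves implicit.
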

\begin{proof}
The proof is the same as the proof of Corollary \ref{cor112}.
\end{proof}

In the special case when $\sigma$ is a Coxeter element in the Weyl group of $\fg$, the above corollary recovers Billig's construction from \cite{yb}.

Observe that, for $r=1$, the lattice $Q\oplus\bZ\delta$ is the root lattice of the affine Kac--Moody algebra $\hat\fg$ of type $X_\ell^{(1)}$, 
while $\fH=\fh\oplus\bC\delta\oplus\bC\Lambda_0$ is the Cartan subalgebra of $\hat\fg$.
Moreover, the set $\{\alpha+p\delta \,|\, \alpha\in\Delta\cup\{0\}, \, p\in\bZ\}$ is the union of $\{0\}$ and the root system of $\hat\fg$.
A natural question is whether one can generalize the statement of Corollary \ref{last} to the case when $\sigma$ is an infinite-order
automorphism of $L$, such as, for example, an element of the Weyl group of $\hat\fg$ (cf.\ \cite{BS}).

\subsection*{Acknowledgments}
We are grateful to Kailash Misra and Naihuan Jing for valuable discussions.
The first author was supported in part by a Simons Foundation grant 584741.

\subsection*{Note Added}
After this article was submitted, we learned of the paper \cite{BY} by Billig and Lau, which contains a similar construction as ours
and, furthermore, constructs irreducible modules over the twisted toroidal Lie algebra. We would like to thank Yuly Billig
for bringing the paper \cite{BY} to our attention. 


\end{document}